\documentclass[11pt]{amsart}

\usepackage{mathtools,amsmath,amssymb}
\usepackage[usenames,dvipsnames]{color}
\usepackage[hyphens]{url}
\usepackage[pagebackref,linktocpage=true,colorlinks=true,linkcolor=RoyalBlue,citecolor=BrickRed,urlcolor=RoyalBlue]{hyperref}
\usepackage[msc-links,abbrev]{amsrefs}

\textwidth=5.5in
\textheight=8in
\oddsidemargin=0.5in
\evensidemargin=0.5in
\topmargin=.5in

\theoremstyle{plain}
\newtheorem{thm}{Theorem}[section]

\newtheorem{lem}[thm]{Lemma}

\theoremstyle{definition}
\newtheorem{note}[thm]{Note}

\newcommand{\R}{\mathbf{R}}
\newcommand{\Hyp}{\mathbf{H}}
\newcommand{\ol}{\overline}
\newcommand{\C}{\mathcal{C}}
\newcommand{\M}{\mathcal{M}}

\newcommand{\sff}{\mathrm{I\kern-0.09emI}}

\renewcommand{\tilde}{\widetilde}
\renewcommand{\phi}{\varphi}
\renewcommand{\epsilon}{\varepsilon}
\renewcommand{\leq}{\leqslant}
\renewcommand{\geq}{\geqslant}

\DeclareMathOperator{\I}{I}

\begin{document}

\vspace*{-0.5in}
	
\title[A local isoperimetric inequality]{A local isoperimetric inequality\\ for balls with nonpositive curvature}

\begin{abstract}
We show that small perturbations of the metric of a ball in Euclidean $n$-space to metrics with nonpositive curvature do not reduce the isoperimetric ratio. Furthermore, the isoperimetric ratio is preserved only if the perturbation corresponds to a homothety of the ball. These results establish a sharp local version of the Cartan-Hadamard conjecture.
\end{abstract}

\author{Mohammad Ghomi}
\address{School of Mathematics, Georgia Institute of Technology, Atlanta, Georgia 30332}
\email{ghomi@math.gatech.edu}
\urladdr{https://ghomi.math.gatech.edu}

\author{John Ioannis Stavroulakis}
\address{School of Mathematics, Georgia Institute of Technology, Atlanta, Georgia 30332}
\email{jstavroulakis3@gatech.edu, john.ioannis.stavroulakis@gmail.com}

\subjclass[2020]{Primary 53C20, 58J05; Secondary 49Q2, 52A38}
\date{Last revised on \today}
\keywords{Isoperimetric ratio, metrics of nonpositive curvature, Rauch comparison theorem, Cartan-Hadamard conjecture, $\textup{CAT}(0)$ spaces, Hyperbolic space.}
\thanks{The first-named author was supported by NSF grant DMS-2202337}

\maketitle

\section{Introduction}

The \emph{isoperimetric ratio} of a compact Riemannian $n$-manifold $\Omega$ with boundary $\partial\Omega$ is given by 
$
\I(\Omega)\coloneqq   |\partial\Omega|^n/|\Omega|^{n-1},
$
where $|\Omega|$ denotes the volume and $|\partial\Omega|$ the perimeter of $\Omega$.
 The \emph{Cartan-Hadamard conjecture} \cite{aubin1975,gromov1981,burago-zalgaller1980} states that if $\Omega$ forms a domain in a complete simply connected manifold of nonpositive (sectional) curvature, known as a \emph{Cartan-Hadamard manifold}, then 
 $$
 \I(\Omega)\geq \I(B^n_\delta),
 $$
  where $B^n_{\delta}:=(B^n,\delta)$ is the closed unit  ball $B^n\subset\R^n$ endowed with the Euclidean metric $\delta$. Furthermore, $\I(\Omega)=\I(B^n_\delta)$ only if $\Omega$ is isometric to a Euclidean ball.
  Here we show that the conjecture holds in a local sense.
 Let $\M _{0}(B^n)$ be the space of  $\C^\infty$ nonpositively curved metrics $g=(g_{ij})$ on $B^n$ with the $\C^2$-norm 
  $
 |g|_{\C^2(B^n)}\coloneqq   \sup_{ij}|g_{ij}|_{\C^2(B^n)},
  $
and $B^n_g$ denote the corresponding Riemannian manifolds.
    
  \begin{thm}\label{thm:main}
There exists $\epsilon>0$ such that  for all metrics $g\in \M  _{0}(B^n)$ with $|g-\delta|_{\C^2(B^n)}\leq\epsilon$,  
$
\I(B^n_g)\geq \I(B^n_\delta).
$
Furthermore,  $\I(B^n_g)= \I(B^n_\delta)$ only if $B^n_g$ is isometric to a Euclidean ball.
  \end{thm}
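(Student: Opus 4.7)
The plan is to realize $\I(B^n_g)$ as a functional on a $\C^2$-neighborhood of $\delta$ in $\M_{0}(B^n)$ and to argue that $\delta$ is a local minimum whose only degenerate direction is homothety. Because $\I$ is invariant under $g\mapsto\lambda g$, my first move would be to fix a gauge transverse to the homothety orbit, say by normalizing $|B^n_g|=\omega_n$, so that the statement reduces to the single inequality $|\partial B^n_g|\geq n\omega_n$ on this slice.

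Writing $g=\delta+h$ and Taylor-expanding in powers of $h$ gives
\[
\I(B^n_g)-\I(B^n_\delta)=L_1(h)+L_2(h,h)+O\bigl(|h|_{\C^2}^{3}\bigr),
\]
with the explicit linear piece
\[
L_1(h)=n^n\!\left[\tfrac{1}{2}\!\int_{S^{n-1}}\!\mathrm{tr}_{\partial}h\,d\sigma\;-\;\tfrac{n-1}{2}\!\int_{B^n}\!\mathrm{tr}\,h\,dx\right].
\]
This functional is sign-indefinite on arbitrary symmetric 2-tensors, but the constraint $K_g\leq 0$ imposes, at first order, a sign condition on the linearized Riemann tensor of $h$. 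In the conformal model $h=f\delta$ it reduces to $\Delta f\geq 0$, and the submean-value inequality yields $\int_{S^{n-1}}f\,d\sigma\geq n\int_{B^n}f\,dx$, i.e.\ exactly $L_1(h)\geq 0$, with equality iff $f$ is harmonic. I would aim to extend this to general $h$ by decomposing it into conformal and trace-free pieces and invoking Rauch's comparison along the family of radial geodesics emanating from a distinguished interior center (e.g.\ the $g$-center of mass of $B^n$), converting the pointwise sign of the sectional curvature into the required boundary-versus-bulk integral comparison.

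On the null cone of $L_1$ I would then turn to the second variation. Here Rauch enters in its full quantitative form: since $K_g\leq 0$, the exponential map at the chosen center $p$ is area-nondecreasing, so pulling $B^n_g$ back via $\exp_p$ to a star-shaped domain $\Omega\subset T_pB^n$ yields $|B^n_g|\geq|\Omega|_\delta$ and $|\partial B^n_g|\geq|\partial\Omega|_\delta$. Combining these with the classical Euclidean isoperimetric inequality on $\Omega$ should give $L_2(h,h)\geq 0$, with equality only along infinitesimally homothetic directions. A standard $\C^2$-compactness argument will lift the infinitesimal non-negativity to a concrete finite neighborhood, yielding the required $\epsilon$. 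The rigidity clauses---equality in Rauch forcing vanishing sectional curvatures along all radial geodesics from $p$, and equality in the Euclidean isoperimetric inequality forcing $\Omega$ to be a round ball---combine to identify $B^n_g$ with a Euclidean ball up to scale.

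The main obstacle I foresee is establishing $L_1(h)\geq 0$ for genuinely non-conformal $h$. The passage from the pointwise constraint $K_g\leq 0$ to the integral inequality $L_1(h)\geq 0$ is a local-to-global step that the submean-value argument handles in the conformal case but that in general requires a more intricate Bochner- or Rauch-type identity integrated over the radial foliation from $p$. Controlling this interplay between the second-derivative curvature constraint and the zeroth-order integrals defining $L_1$ is, I expect, where the bulk of the technical work will lie; the quantitative error estimates that dominate the $O(|h|_{\C^2}^{3})$ remainder will also depend sensitively on the same mechanism.
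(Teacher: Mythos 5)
Your setup---center at an interior point, pass to normal coordinates so that $B^n_g$ is isometric to a star-shaped domain $\Omega_{\ol g}\subset\R^n$, and invoke Rauch to get a Jacobian bound $J_{\ol g}\geq 1$---is exactly how the paper begins. But your closing argument has a genuine gap. You assert that Rauch gives $|B^n_g|\geq|\Omega|_\delta$ and $|\partial B^n_g|\geq|\partial\Omega|_\delta$, and that combining these with the classical Euclidean isoperimetric inequality on $\Omega$ yields the result. It cannot: both Rauch inequalities point the same way, so while $|\partial B^n_g|^n\geq|\partial\Omega|_\delta^n\geq n^n\omega_n|\Omega|_\delta^{n-1}$, what you need on the right is $n^n\omega_n|B^n_g|^{n-1}$, and $|B^n_g|\geq|\Omega|_\delta$ sends that last estimate in the wrong direction. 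This is precisely the obstruction that makes the Cartan--Hadamard conjecture nontrivial---Rauch inflates both the perimeter and the volume, and a naive comparison of the ratio yields nothing. The paper circumvents this by expressing $|\Omega_{\ol g}|$ and $|\partial\Omega_{\ol g}|$ explicitly in polar normal coordinates via the Jacobian $J_{\ol g}$, the radial function $f$, and the boundary density $\rho(\theta)=J_{\ol g}(f(\theta)\theta)/J_k(f(\theta))$, and then introduces a one-parameter family $\lambda(t)$ interpolating between the model ratio $\I(\Omega_{\delta^k})$ at $t=0$ and a lower bound for $\I(\Omega_{\ol g})$ at $t=1$; the substance of the proof is showing $\lambda'(t)\geq 0$, which rests on the monotonicity of $r\mapsto J_{\ol g}(r\theta)/J_k(r)$, on the linear-algebra Lemma comparing $\det$ and $\det\cdot(\,\cdot\,)^{-1}$, and on $|f-1|_{\C^1}$ being small---not on any second-variation positivity.

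Your first-variation framework has a separate difficulty that you partly acknowledge but do not resolve. The flat metric $\delta$ is not an interior point of $\M_0(B^n)$: a generic $\C^2$-small perturbation creates positive curvature somewhere, so the feasible set near $\delta$ is a closed cone rather than a smooth constraint manifold, and what you would need is $L_1(h)\geq 0$ for every $h$ in the tangent cone with remainders controlled uniformly. You establish this only for conformal $h=f\delta$ via the submean-value inequality; the general, sign-indefinite case is deferred, and the subsequent ``compactness lifts infinitesimal non-negativity to a finite neighborhood'' step is not a standard argument when the first variation does not vanish on a manifold. The paper's monotonicity argument avoids the expansion entirely, comparing $\I(\Omega_{\ol g})$ to $\I(\Omega_\delta)$ directly and at finite (not infinitesimal) order via the coarea formula and the pointwise Jacobian inequality, and its equality case ($\rho\equiv 1\Rightarrow J_{\ol g}\equiv J_k\Rightarrow\ol g=\delta^k$) is likewise exact rather than infinitesimal.
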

  
  To establish this result we show that, for small $\epsilon$,  $B^n_g$ is isometric via  normal coordinates to a star-shaped domain $\Omega\subset \R^n$ with metric $\ol g$,  denoted $\Omega_{\ol g}:=(\Omega,\ol g)$. So  $\partial\Omega_{\ol g}$ is the graph of a radial function $f$ on the unit sphere $S^{n-1}$ in the Euclidean sense.  Using Rauch's comparison theorem, we find a general inequality \eqref{eq:main} for $\I(\Omega_{\ol g})$ in terms of $f$ and the Jacobian of the exponential map of $\Omega_{\ol g}$  which may be of independent interest. It follows that $\I(\Omega_{\ol g})\geq \I(\Omega_{\delta})$ for small $\epsilon$, via a variational technique that we devise below. But $\I(\Omega_{\delta})\geq \I(B^n_\delta)$, by the classical isoperimetric inequality in $\R^n$, which completes the proof. Refining this method, we generalize Theorem \ref{thm:main} to metrics with curvature $\leq k\leq 0$,  as described in Theorem \ref{thm:main2}.
 
 The Cartan-Hadamard conjecture, which would extend the classical isoperimetric inequality \cite{osserman1978, chavel2001},  is known to hold only in dimensions $\leq 4$ \cite{weil1926,kleiner1992,croke1984}. Some partial results are also known in higher dimensions for geodesic balls \cite{ballmann-gromov-schroeder}, small volumes \cite{morgan-johnson2000,nardulli-acevedo2020,druet2002}, large volumes \cite{yau1975,burago-zalgaller1988},  hyperbolic space \cite{ritore2023,borbely2002}, or with a dimension-dependent constant  \cite{hoffman-spruck1974,croke1984}.  In contrast to Theorem \ref{thm:main}, the results for small and large volumes all require a negative upper bound for curvature. See \cite{kloeckner-kuperberg2019,ghomi-spruck2022,ghomi-spruck2023b,li-lin-xu2025} for some more recent studies, \cite{druet2010} for an introduction to the problem, and
 \cite{druet-hebey2002} for applications to Sobolev inequalities.

\begin{note}
The condition in Theorem \ref{thm:main} that the metric $g$ be close to the Euclidean metric $\delta$ cannot be omitted. Indeed there exist negatively curved metrics $g$ on the unit ball $B^3$ such that $I(B^3_g)$ is arbitrarily small \cite[Cor. 1.10]{kloeckner-kuperberg2019}. These examples also show that nonpositively curved balls cannot in general be embedded isometrically in a Cartan-Hadamard manifold (since the Cartan-Hadamard conjecture holds in dimension $3$). Thus while Theorem \ref{thm:main} adds more credence to the Cartan-Hadamard conjecture, it will not be subsumed by a positive resolution of it.
\end{note}

 \begin{note}\label{note:H}
Theorem \ref{thm:main} does not generalize  to geodesic balls in the hyperbolic space $\Hyp^n_k$, of constant curvature $k<0$, since the isoperimetric ratio of balls in $\Hyp^n_k$ is an increasing function of their radius. More precisely, consider the geodesic balls $U(r)$ of radius $r\leq 1$ centered at a point $o$ of $\Hyp^n_k$. There are natural diffeomorphisms $\phi_r\colon B^n\to U(r)$ given by rescalings of normal coordinates centered at $o$. Let $g_r$ be the corresponding pullback metrics. Then $g_r$ has constant curvature $k$, and $g_r\to g_1$ in the $\C^2$-topology, as $r\to 1^{-}$, but $\I(B^n_{g_r})< \I(B^n_{g_1})$ for $r<1$.
\end{note}
 
  \section{Preliminaries}

Let $\M_k(B^n)$ be the space of $\C^\infty$ metrics $g$ on $B^n$ with curvature bounded above by  $k\in(-\infty,0]$. For each $x\in B^n$ we identify $g_x$ with its matrix representation $(g_{ij}(x))$ with respect to the standard basis $e_i$ of $\R^n$. So 
\begin{equation}\label{eq:g}
g_x(v,w)=v^T \! g_x  \,\! w,
\end{equation}
for tangent vectors $v$, $w\in T_xB^n_g\simeq\R^n$.
The \emph{$\C^\ell$-topology} on $\M_{k}(B^n)$ is induced by the norm 
$|g|_{\C^\ell(B^n)}\coloneqq   \sup_{ij}|g_{ij}|_{\C^\ell(B^n)}$.
Here we record some basic observations on the structure of $\M_k(B^n)$, and its representation in normal coordinates. 

We say $B^n_g$ is \emph{strictly convex} provided that every pair of its points can be joined by a unique geodesic, and the second fundamental form of $\partial B^n_g$ with respect to the outward normal is positive definite. We need the
following fact whose proof utilizes the theory of  $\textup{CAT}(0)$ spaces \cites{bridson-haefliger1999,akp2024,bbi2001},
which are generalizations of Cartan-Hadamard manifolds. More precisely, a $\textup{CAT}(0)$ space is a metric space where every pair of points may be joined by a unique curve realizing the distance between the points, and the curvature is nonpositive in the sense of Alexandrov. 
  
 \begin{lem}\label{lem:convex}
 The set of metrics $g\in \M _{k}(B^n)$ such that $B^n_g$  is strictly convex is open in the $\C^1$-topology.
 \end{lem}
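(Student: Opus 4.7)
The plan is to decompose strict convexity of $B^n_g$ into its two defining conditions---positive-definiteness of the outward second fundamental form $\sff_g$ of $\partial B^n$, and uniqueness of minimizing geodesics between every pair of points---then verify that the first condition is $C^1$-open on $\M_k(B^n)$, and finally show that, within $\M_k(B^n)$, the first condition already implies the second.

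First I would check that positive-definiteness of $\sff_g$ is $C^1$-open. The boundary $\partial B^n=S^{n-1}$ is fixed, and at each $p\in S^{n-1}$ the outward $g$-unit normal $\nu_g(p)$ and the Christoffel symbols of $g$ are continuous functions of $g$ in the $C^1$-topology. Hence the bilinear form $\sff_g$ on $TS^{n-1}$ depends continuously on $g$ in this topology, and by compactness of $S^{n-1}$ its positive-definiteness persists under sufficiently small $C^1$-perturbations.

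Next I would show that any $g\in\M_k(B^n)$ with $\sff_g>0$ automatically has unique minimizing geodesics between each pair of points of $B^n_g$. The idea is to extend $g$ to a complete smooth metric $\tilde g$ on $\R^n$ of nonpositive sectional curvature, agreeing with $g$ near $B^n$ and smoothly interpolating with a Euclidean metric outside a larger concentric ball. Then $(\R^n,\tilde g)$ is a Cartan--Hadamard manifold, hence a $\textup{CAT}(0)$ space, and positivity of $\sff_g$ makes $B^n$ locally geodesically convex in $(\R^n,\tilde g)$. A closed, locally convex subset of a $\textup{CAT}(0)$ space is globally convex and itself $\textup{CAT}(0)$; in particular, any two points of $B^n_g$ are joined in $(\R^n,\tilde g)$ by a unique minimizing geodesic, which by convexity lies in $B^n$ and therefore agrees with the intrinsic minimizing geodesic in $B^n_g$.

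Combining the two steps, the set of $g\in\M_k(B^n)$ for which $B^n_g$ is strictly convex coincides with the $C^1$-open set $\{\sff_g>0\}$. The main technical obstacle will be the construction of the nonpositively curved extension $\tilde g$, since a naive linear interpolation between $g$ and the Euclidean metric need not preserve the curvature sign in the gluing annulus; one workaround is to avoid extension entirely and instead endow $B^n_g$ directly with its length metric, then apply a local-to-global theorem for simply connected, locally $\textup{CAT}(0)$ metric spaces with locally convex boundary, using $\sff_g>0$ to verify the local $\textup{CAT}(0)$ condition at boundary points.
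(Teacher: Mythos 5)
Your first step — that positive-definiteness of $\sff_g$ is $\C^1$-open on the fixed hypersurface $\partial B^n=S^{n-1}$ — matches the paper's argument exactly. Where you diverge is on the second ingredient. The extension route you describe first (build a complete nonpositively curved $\tilde g$ on $\R^n$ agreeing with $g$ near $B^n$) is indeed problematic, for precisely the reason you flag: there is no naive interpolation that preserves the curvature sign, and a genuine extension result would require real work. The paper avoids this entirely and instead runs what you call your ``workaround'': it works directly with $B^n_g$ as a metric space, invokes the Alexander--Berg--Bishop theorem to conclude that nonpositive sectional curvature in the interior together with $\sff_g>0$ on the boundary gives nonpositive Alexandrov curvature (so $B^n_g$ is locally $\textup{CAT}(0)$), and then applies the generalized Cartan--Hadamard (local-to-global) theorem for simply connected spaces to conclude $B^n_g$ is $\textup{CAT}(0)$. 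So your fallback sketch is the right approach and essentially coincides with the paper's; the main thing the paper buys you is the specific citation for the boundary-convexity-implies-Alexandrov-bound step, which is the nontrivial input your sketch leaves to ``verify the local $\textup{CAT}(0)$ condition at boundary points.''

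One further small gap: you only establish uniqueness of \emph{minimizing} geodesics, but the paper's definition of strict convexity asks for uniqueness of \emph{all} geodesics between pairs of points. The paper closes this by observing that once minimizing geodesics are unique, and their interiors avoid the boundary (since $\sff_g>0$), they are genuine Riemannian geodesics and the cut locus of every point is empty, whence $\exp$ is a diffeomorphism onto its image and geodesics between pairs of points are unique without the minimality qualifier. You would need to add this short final argument to fully recover the statement.
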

 \begin{proof}
 Fix a metric $g_0\in \M_{k}(B^n)$ such that $B^n_{g_0}$  is strictly convex.
 Consider metrics $g\in\M_{k}(B^n)$ with $|g-g_0|_{\C^1(B^n)}\leq \epsilon$, for a constant $\epsilon\in (0,\infty)$. The second fundamental form of $\partial B^n_g$, with respect to the outward normal $\nu$, is given by $\sff_g(v,w)\coloneqq g(D^g_v\nu,w)$, where $D^g$ is the covariant derivative with respect to $g$, and $v$, $w$ are tangent vectors of $\partial B^n_g$. Note that  $\partial B^n_g$ is a level set of the function $F(x)\coloneqq |x|$, where $|\cdot|$ is the Euclidean norm. Then $\nu(x)=\nabla^g F(x)/|\nabla^g F(x)|_g$, where $\nabla^g$ is the gradient with respect to $g$, and $|\cdot|_g\coloneqq \sqrt{g(\cdot,\cdot)}$. Furthermore, $D^g$ is determined by the Christoffel symbols, which depend on the first derivatives of $g$.  Hence $g\mapsto\sff_g$  is continuous in the $\C^1$-topology.  So, for sufficiently small $\epsilon$, $\sff_g$ remains positive definite, since $\sff_{g_0}$ is positive definite.  
 
 Since $\sff_g$ is positive definite and $g$ has nonpositive curvature, the curvature of $B^n_g$ is bounded above by $0$ in Alexandrov's sense \cite[p. 704]{alexander-berg-bishop1993}. So $B^n_g$ is locally a $\textup{CAT}(0)$ space. Then, since $B^n_g$ is simply connected, it is a $\textup{CAT}(0)$ space by the generalized Cartan-Hadamard theorem  \cites{alexander-bishop1990,bridson-haefliger1999,bbi2001}. In particular, every pair of points in $B^n_g$ may be joined by a unique shortest curve $\gamma$.
Since $\sff_g$ is positive definite, the interior of $\gamma$ cannot touch $\partial B^n_g$. Hence $\gamma$ is a (Riemannian) geodesic. So every pair of points of $B^n_g$ can be joined by a unique minimal geodesic. It follows that the cut locus of every point of $B^n_g$ is empty, so these geodesics are unique.
\end{proof}
 
 Let $\M^\star_{k}(B^n)\subset\M_{k}(B^n)$ consist of metrics such that $B^n_g$ is \emph{star-shaped} (with respect to its center $o$), i.e., there exists a domain $\ol{B^n_g}$ in the tangent space $T_o B^n_g\simeq\R^n$ such that the exponential map $\exp_o\colon \ol{B^n_g}\to B^n_g$ is a diffeomorphism, and the radial geodesics of $B^n_g$, which emanate from $o$, meet $\partial B^n_g$ transversely. In particular, if $B^n_g$ is strictly convex then it is star-shaped.
 
Since $g$ is symmetric and positive definite, there exists a positive definite symmetric matrix $h\coloneqq \sqrt{g_o}$.  Then $h^{-1}$ also exists and is positive definite. Let $\ol e_i\coloneqq   h^{-1}e_i$. Then 
$
g_o(\ol e_i, \ol e_j)=e_i^T (h^{-1})^Th^2 h^{-1}e_j=\delta(e_i,e_j)
$
by \eqref{eq:g}.
So $\ol e_i$ form an orthonormal basis for $T_oB^n_g$, which depends continuously on $g$.
Let $\phi=\phi_g\colon \R^n\to T_o B^n_g$ be the corresponding coordinate map given by $\phi(x)\coloneqq   \sum x_i\ol e_i$. Set $\Omega\coloneqq \phi^{-1}(\ol B^n_g)$, and $\tilde\exp_o\coloneqq \exp_o\!\circ\, \phi|_\Omega$. Then $\tilde\exp_o\colon \Omega\to B^n$ is a diffeomorphism. Let $\ol g\coloneqq   \tilde\exp_o^*(g)$ be the pullback metric. Then 
 $\tilde\exp_o\colon \Omega_{\ol g}\to B^n_g$ is an isometry. Furthermore,   the standard coordinates $x_i$ of $\R^n$  form normal coordinates on $\Omega_{\ol g}$. So
\begin{equation}\label{eq:g-delta}
\ol g_o(e_i,e_j)=g_o\big(d(\tilde\exp_o)_o(e_i), d(\tilde\exp_o)_o(e_j)\big)=g_o(\ol e_i, \ol e_j)=\delta(e_i,e_j).
\end{equation}

For $\theta\in S^{n-1}$, let $\rho_\theta$ be the radial geodesic which connects $o$ to $\partial \Omega_{\ol g}$ with initial direction $\theta$. Note that $\rho_\theta$ intersects $\partial \Omega_{\ol g}$ transversely, since the corresponding radial geodesics $\tilde{\exp_o}(\rho_\theta)$ of $B^n_g$ are transversal to $\partial B^n_g$ by definition. Let
$f_{\ol g}(\theta)\coloneqq   \textup{Length}(\rho_\theta)$ be the \emph{radial function} of $\partial\Omega_{\ol g}$.  Since $x_i$ are normal coordinates, $\rho_\theta(t)=t\theta$ for $0\leq t\leq f_{\ol g}(\theta)$.  So $\partial\Omega_{\ol g}$ is the graph of $f_{\ol g}$ over $S^{n-1}$ in the Euclidean sense. Note that  $\partial\Omega_{\ol g}$ is a $\C^\infty$ hypersurface, since $\partial\Omega_{\ol g}=(\tilde\exp_o)^{-1}(\partial B^n_g)$. Thus $f_{\ol g}$ is $\C^\infty$, since $\rho_\theta$ are transversal to $\partial\Omega_{\ol g}$.

 \begin{lem}\label{lem:continuous}
 The mapping $ \M  ^\star_{k}(B^n)\ni g\mapsto f_{\ol g}\in \C^1(S^{n-1})$ is continuous in the $\C^2$-topology. 
  \end{lem}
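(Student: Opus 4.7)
The plan is to realize $f_{\ol g}$ as the solution of an implicit equation involving $\tilde\exp_o^g$ and a smooth defining function of $\partial B^n$, then combine the continuous dependence of the exponential map on the metric with the implicit function theorem.

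First I would show that $g \mapsto \tilde\exp_o^g$ is continuous as a map from $\M^\star_k(B^n)$ with the $\C^2$-topology into the space of $\C^1$-maps. Since $\tilde\exp_o^g = \exp_o^g \circ \phi_g|_\Omega$ and the entries of the linear map $\phi_g$ are smooth functions of $g_o$, it suffices to handle $g \mapsto \exp_o^g$. Geodesics of $B^n_g$ satisfy the ODE
\begin{equation*}
\ddot \gamma^i + \Gamma^i_{jk}(\gamma)\,\dot\gamma^j\,\dot\gamma^k = 0,
\end{equation*}
whose Christoffel symbols are rational expressions in the components $g_{ij}$ and their first partial derivatives. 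Hence $\Gamma_g$ depends $\C^1$-continuously on $g$ in the $\C^2$-topology, and by standard continuous dependence of ODE solutions on $\C^1$-vector fields, the flow $(t, v) \mapsto \exp_o^g(tv)$ depends $\C^1$-continuously on $g$ on any fixed compact set of initial data.

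Second, set $F(x) \coloneqq |x|^2 - 1$ and $H(t, \theta; g) \coloneqq F\circ\tilde\exp_o^g(t\theta)$, so that $H(f_{\ol g}(\theta), \theta; g) = 0$ for $\theta \in S^{n-1}$. By the previous step, $H$ depends $\C^1$-continuously in $(t, \theta)$ on $g$. The derivative
\begin{equation*}
\partial_t H \;=\; 2\,\bigl\langle \tilde\exp_o^g(t\theta),\; d\tilde\exp_o^g(t\theta)(\theta)\bigr\rangle
\end{equation*}
at $t = f_{\ol g}(\theta)$ is nonzero: $\tilde\exp_o^g(f_{\ol g}(\theta)\theta) \in \partial B^n$ is a unit radial vector, while the transversality of $\rho_\theta$ to $\partial\Omega_{\ol g}$---built into the definition of $\M^\star_k(B^n)$---is precisely the statement that $d\tilde\exp_o^g(f_{\ol g}(\theta)\theta)(\theta)$ has nonzero radial component. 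By continuity and compactness of $S^{n-1}$, $|\partial_t H|$ is uniformly bounded away from $0$ in a neighborhood of the zero set, uniformly in $g$ close to the reference metric. The quantitative implicit function theorem then yields the desired $\C^1(S^{n-1})$-continuous dependence of $f_{\ol g}$ on the $\C^1$-jet of $H$, and hence on $g$ in the $\C^2$-topology.

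The main obstacle is the first step, namely the $\C^1$-continuous dependence of the Riemannian exponential map on the metric with the expected one-derivative loss through the passage to Christoffel symbols. This is a standard consequence of smooth dependence of ODE flows on parameters, but must be applied carefully since the domain $\Omega = \Omega_g$ also varies with $g$; this is handled by fixing a common compact neighborhood of $0 \in \R^n$ on which all relevant $\exp_o^g$ are defined for $g$ near the reference metric (using that $\phi_g$ itself varies continuously and that the star-shapedness of $B^n_g$ persists under small $\C^1$-perturbations by Lemma \ref{lem:convex}). The remaining implicit-function step is routine given the uniform transversality.
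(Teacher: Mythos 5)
Your proposal is correct and follows essentially the same route as the paper: both rest on the $\C^1$-continuous dependence of $\tilde\exp_o^g$ on $g$ in the $\C^2$-topology, coming from the geodesic ODE whose coefficients are the Christoffel symbols, which involve one derivative of $g$. The paper then reads $\partial\Omega_{\ol g}=(\tilde\exp_o)^{-1}(\partial B^n_g)$ and deduces $\C^1$-convergence of the radial functions from $\C^1$-convergence of these hypersurfaces, whereas you solve $F\circ\tilde\exp_o^g(t\theta)=0$ via the implicit function theorem using the transversality built into $\M^\star_k(B^n)$---a cosmetic difference that has the mild virtue of making the role of that transversality explicit.
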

 \begin{proof}
 Fix $g\in  \M  ^\star_{k}(B^n)$ and let $g_i\in  \M  ^\star_{k}(B^n)$ be a sequence with $g_i\to g$ in the $\C^2$-topology.  Let $\exp_o$ and $\exp_o^i$ be the exponential maps of $B^n_g$ and $B^n_{g_i}$ respectively. Then $(\exp_o^i)^{-1}\to(\exp_o)^{-1}$ in the $\C^1$-topology of maps from $B^n$ to $T_o B^n\simeq\R^n$, because exponential maps are determined by ODE whose coefficients involve the first derivatives of the metric. Consequently,  
 $$
 \partial\Omega_{\ol{g}_{i}}=(\tilde{\exp_o^i})^{-1}(\partial B^n_{g_i})\;\;\longrightarrow\;\;(\tilde{\exp_o})^{-1}(\partial B^n_g)=\partial\Omega_{\ol g}
 $$ 
 in the $\C^1$-topology of hypersurfaces embedded in $\R^n$, because here the differentials 
 $d(\tilde{\exp_o^i})^{-1}=\phi_{g_i}^{-1}\circ d(\exp_o^i)^{-1}\to\phi^{-1}\circ d(\exp_o)^{-1}=d(\tilde{\exp_o})^{-1}$ in the $\C^0$-topology of maps from $B^n$ to $\R^n$, and so $(\tilde\exp_o^i)^{-1}\to \tilde\exp_o^{-1}$ in the $\C^{1}$-topology. Hence the radial functions
 $f_{\ol{g}_{i}}\to f_{\ol g}$ in $\C^1(S^{n-1})$.
\end{proof}

\section{Proof of the Main Result}
Here we establish the following result, which implies Theorem \ref{thm:main} via the above lemmas. For any metric $g\in {\mathcal{M}}_{k}^{\star }(B^{n})$, let $\Omega_{\ol g}\subset\R^n$ denote the corresponding star-shaped domain, in normal coordinates, with radial function $f_{\ol g}$ as defined above.  For $k\leq0$, let $\delta^k$ be the metric of constant curvature $k$ on $\R^n$ in normal coordinates centered at $o$. So $\delta^0=\delta$ is the standard Euclidean metric. Also note that $\Omega_{\delta^k}$ is isometric to a domain with radial function $f_{\ol g}$ in  $\R^n$ or hyperbolic space $\Hyp^n_k$, if $k=0$ or $k<0$ respectively.

\begin{thm}\label{thm:main2}
For every $R>0$ there exists $\epsilon >0$ such that for all metrics $g\in {\mathcal{M}}_{k}^{\star }(B^{n})$ 
with $|f_{\ol g}-R|_{\C^1(S^{n-1})}\leq \epsilon $, $\I(\Omega _{\ol{g}})\geq \I(\Omega_{\delta^k})$. 
Furthermore, $\I(\Omega_{\ol{g}})=\I(\Omega_{\delta^k})$ only if $\overline g=\delta^k$.
\end{thm}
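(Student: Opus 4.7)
The plan is to work in polar normal coordinates on $\Omega_{\ol g}$, where $\ol g = dt^2 + g_{t,\theta}$ for a smooth family of angular metrics $g_{t,\theta}$ on $S^{n-1}$, and in which $\delta^k$ takes the analogous form $dt^2 + s_k(t)^2 g_{S^{n-1}}$ with $s_k$ the Jacobi sine function ($s_0(t)=t$, $s_k(t)=\sinh(\sqrt{-k}\,t)/\sqrt{-k}$ for $k<0$). By Rauch's theorem applied to radial Jacobi fields, the bound on sectional curvatures of $\ol g$ yields the pointwise quadratic-form inequality $g_{t,\theta} \succeq s_k(t)^2 g_{S^{n-1}}$ on $T_\theta S^{n-1}$ for every $(t,\theta)$ with $t\in[0,f_{\ol g}(\theta)]$, with equality along a radial geodesic forcing all sectional curvatures of 2-planes containing its velocity to equal $k$.

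In these coordinates volume and perimeter have the explicit forms
\begin{align*}
|\Omega_{\ol g}| &= \int_{S^{n-1}}\!\!\int_0^{f(\theta)}\sqrt{\det g_{t,\theta}}\,dt\,d\theta,\\
|\partial\Omega_{\ol g}| &= \int_{S^{n-1}}\sqrt{\det\bigl(g_{f(\theta),\theta}+df_\theta\otimes df_\theta\bigr)}\,d\theta,
\end{align*}
with analogous formulas for $\delta^k$ obtained by substituting $s_k(t)^2 g_{S^{n-1}}$. The key claim is that $\I$, viewed as a functional of the family of angular metrics with $f$ held fixed, is non-decreasing in the positive-semidefinite ordering. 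To prove this I would use the linear homotopy $g_{s,t,\theta}\coloneqq s_k(t)^2 g_{S^{n-1}} + s\, h_{t,\theta}$ for $s\in[0,1]$, where $h_{t,\theta}\coloneqq g_{t,\theta}-s_k(t)^2 g_{S^{n-1}}\succeq 0$ is the Rauch defect, and show that $\I(s)$ is non-decreasing. A direct first-variation computation based on the displayed formulas expresses $\I'(s)$ as a sum of a perimeter contribution and a bulk contribution, each an integral of $h_{t,\theta}$ against a kernel built from $g_{s,t,\theta}$, $f$, and $df$.

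At the rotationally symmetric base case $f\equiv R$, this kernel simplifies dramatically: the term $df_\theta$ drops from the perimeter integrand, every angular slice contributes identically, and the remaining scalar inequality reduces to the monotonicity of $\I$ on concentric geodesic balls in rotationally symmetric Cartan-Hadamard models (cf.\ Note~\ref{note:H}), which holds with a strict, uniform-in-$\theta$ positive lower bound. The hypothesis $|f-R|_{\C^1(S^{n-1})}\leq\epsilon$ then lets me treat the general case as a perturbation of this radial one: the oscillation of $f$ and the gradient $df$ contribute correction terms of order $\epsilon$ to $\I'(s)$, which for sufficiently small $\epsilon$ cannot overcome the strictly positive base value. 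Integrating $\I'(s)\geq 0$ from $0$ to $1$ then gives $\I(\Omega_{\ol g})\geq\I(\Omega_{\delta^k})$.

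The main obstacle is precisely this last step: extracting a strict, uniformly positive lower bound on the first-variation kernel at $f\equiv R$ strong enough to absorb the $\epsilon$-correction requires careful bookkeeping of the competition between the growth of $|\Omega_{\ol g}|$ (which tends to decrease $\I$) and that of $|\partial\Omega_{\ol g}|$ (which tends to increase $\I$) as the Rauch defect $h_{t,\theta}$ grows. For the equality statement, $\I(\Omega_{\ol g})=\I(\Omega_{\delta^k})$ forces $\I'(s)\equiv 0$ along the homotopy, hence $h_{t,\theta}\equiv 0$ pointwise on $\Omega$, and the equality clause of Rauch then yields $\ol g = \delta^k$ on $\Omega$.
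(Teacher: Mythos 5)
Your proposal diverges from the paper in a way that opens a genuine gap, and the gap occurs precisely at the step you flag as ``the main obstacle.''

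The central difficulty is that your ``key claim'' --- that $\I$, as a functional of the family of angular metrics $g_{t,\theta}$ with $f$ held fixed, is non-decreasing in the positive-semidefinite ordering --- is false. Consider the rotationally symmetric case $g_{t,\theta}=\phi(t)^2 g_{S^{n-1}}$ with $f\equiv R$. Then $|\Omega|=\omega_{n-1}\int_0^R \phi^{n-1}\,dt$ while $|\partial\Omega|=\omega_{n-1}\phi(R)^{n-1}$. A nonnegative bump $\psi$ added to $\phi$, supported in the interior with $\psi(R)=0$, strictly increases the volume, leaves the perimeter unchanged, and therefore strictly decreases $\I$. So the pointwise Rauch inequality $g_{t,\theta}\succeq s_k(t)^2 g_{S^{n-1}}$ --- which is all your argument extracts from the curvature hypothesis --- cannot by itself imply $\I(\Omega_{\ol g})\geq \I(\Omega_{\delta^k})$, and the first-variation sign along your linear homotopy is simply not controllable by $h_{t,\theta}\succeq 0$. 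What rules out the bad perturbation above is a \emph{stronger} consequence of Rauch, the one recorded in Lemma~\ref{lem:J}(i): the ratio $r\mapsto J_{\ol g}(r\theta)/J_k(r)$ is nondecreasing. This forces the volume defect to accumulate outward; a bump concentrated near the origin that returns to $1$ at the boundary would violate this monotonicity and hence cannot come from a metric of curvature $\leq k$. Your linear homotopy $g_{s,t,\theta}=s_k^2 g_{S^{n-1}}+s\,h_{t,\theta}$ does not preserve this monotone-ratio structure (nor the curvature bound) at intermediate $s$, so it passes through exactly the kind of configurations the counterexample exploits, and there is no reason for $\I'(s)\geq 0$.

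The paper sidesteps this entirely. It never deforms the metric; it uses Lemma~\ref{lem:J}(i) to bound $J_{\ol g}(r\theta)\leq\rho(\theta)\,J_k(r)$ where $\rho(\theta)$ is the Jacobian ratio \emph{at the boundary}, and Lemma~\ref{lem:J}(iii) to bound the perimeter, so that both sides of $\I$ are controlled by the single boundary function $\rho\geq 1$. The variational argument is then carried out on a scalar family $\lambda(t)$ obtained by replacing $\rho$ with $\rho^t$ inside these bounds, not on a family of metrics. Your equality-case outline inherits the same gap, since it rests on $\I'(s)\equiv 0$ along the undefined homotopy. To repair your approach you would need to build into the homotopy (or into the first-variation estimate) the monotone-ratio information from Rauch, at which point you are essentially reconstructing the $\rho$-based reduction of the paper.
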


If $\epsilon$ in Theorem \ref{thm:main} is sufficiently small, then $B^n_g$ is star-shaped by Lemma \ref{lem:convex}, so
$g\in {\mathcal{M}}_{0}^{\star }(B^{n})$. Furthermore, $|f_{\ol g}-1|_{\C^1(S^{n-1})}$ can be made arbitrarily small by Lemma \ref{lem:continuous}, since $f_{\ol{\delta}}=1$. So Theorem \ref{thm:main2}, together with the classical isoperimetric inequality, yields that 
$$
\I(B^n_g)=\I(\Omega_{\ol{g}})\geq \I(\Omega_\delta)\geq \I(B^n_\delta).
$$
If $\I(B^n_g)=\I(B^n_\delta)$, then $\I(\Omega_{\ol{g}})=\I(\Omega_\delta)$, which yields $\ol g=\delta$. So Theorem \ref{thm:main} indeed follows from Theorem \ref{thm:main2} (this argument cannot be generalized to $k<0$, as we had pointed out in Note \ref{note:H}, due to the use of the classical isoperimetric inequality).
Next, to prove Theorem \ref{thm:main2}, we begin by recording some basic facts from linear algebra. 
For any square matrix $A$, let $A(v,w)\coloneqq v^T \!\!A w$ denote the corresponding quadratic form.

\begin{lem}
\label{lem:LA} Let $A$ and $B$ be symmetric positive definite $n\times n$
matrices. Suppose that for all $v\in {\R}^{n}$, $A(v,v)\geq
B(v,v). $ Then

\begin{enumerate}
\item[(i)] {$\det(A)\geq \det(B)$},

\item[(ii)] {$\det(A) A^{-1}(v,v)\geq \det(B)B^{-1}(v,v)$}.
\end{enumerate}
Furthermore, equality holds in (i), and  in (ii) for all $v$, only if $A=B$.
\end{lem}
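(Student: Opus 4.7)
The plan is to exploit simultaneous diagonalization of the pair $(A,B)$. Since $B$ is symmetric positive definite, there is an invertible matrix $P$ with $P^T B P = I$ and $P^T A P = D$, where $D = \mathrm{diag}(\lambda_1, \ldots, \lambda_n)$. Under the change of variable $v = Pu$, the hypothesis $A(v,v) \geq B(v,v)$ becomes $\sum_i \lambda_i u_i^2 \geq \sum_i u_i^2$ for all $u \in \R^n$, so $\lambda_i \geq 1$ for every $i$.

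Part (i) is then immediate: $\det(A)/\det(B) = \det(D) = \prod_i \lambda_i \geq 1$, and equality forces each $\lambda_i = 1$, so $D = I$ and hence $A = B$.

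For part (ii), I would write out both sides after the diagonalization. From $A = P^{-T} D P^{-1}$ and $B = P^{-T} P^{-1}$ one reads off $A^{-1} = P D^{-1} P^T$, $B^{-1} = P P^T$, $\det(A) = \det(D)/\det(P)^2$, and $\det(B) = 1/\det(P)^2$. Setting $w \coloneqq P^T v$, a short computation gives
\[
\det(A)\, A^{-1}(v,v) - \det(B)\, B^{-1}(v,v) = \frac{1}{\det(P)^2}\, w^T\! \big(\det(D)\, D^{-1} - I\big)\, w.
\]
The diagonal matrix $\det(D)\,D^{-1} - I$ has $i$-th entry $\prod_{j \neq i} \lambda_j - 1 \geq 0$, so the right-hand side is nonnegative, which proves (ii).

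For the equality case in (ii), if the difference vanishes for every $v$, then (as $w = P^T v$ ranges over all of $\R^n$) every diagonal entry $\prod_{j \neq i} \lambda_j - 1$ must vanish, i.e.\ $\prod_{j \neq i} \lambda_j = 1$ for each $i$. Combined with $\lambda_j \geq 1$, this forces every $\lambda_j = 1$: if some $\lambda_{j_0} > 1$, then $\prod_{j \neq i} \lambda_j \geq \lambda_{j_0} > 1$ for any $i \neq j_0$, a contradiction. Hence $D = I$ and $A = B$. I do not anticipate a genuine obstacle here; the only delicate points are tracking the factors $\det(P)$ correctly in (ii) and invoking equality for \emph{all} $v$ (rather than a single $v$) to recover $A = B$.
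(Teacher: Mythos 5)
Your proof is correct and essentially the same as the paper's: both reduce the pair $(A,B)$ to a simultaneously diagonal form with $B$ normalized to the identity, deduce that the diagonal entries $\lambda_i$ are $\geq 1$, and then read off (i) from $\prod_i\lambda_i\geq 1$ and (ii) from $\prod_{j\neq i}\lambda_j\geq 1$, with the equality cases forcing every $\lambda_i=1$. The only cosmetic difference is that the paper performs the change of basis in two explicit steps (spectral decomposition of $B$, then the auxiliary matrix $M=\Lambda^{-1/2}QAQ^{-1}\Lambda^{-1/2}$), whereas you invoke simultaneous diagonalization directly; the underlying computation and the key inequality are identical.
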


\begin{proof}
Let $Q^{-1}\Lambda Q$ be the spectral decomposition of $B$. Then 
$v^{T}Av\geq v^{T}Q^{-1}\Lambda Qv$. Setting $w\coloneqq \Lambda ^{1/2}Qv$, and 
$M\coloneqq \Lambda^{-1/2}QAQ^{-1}\Lambda ^{-1/2}$, we obtain 
$$
M(w,w)\geq |w|^2.
$$
So the eigenvalues $\lambda _{i}$ of $M$ are $\geq 1$. Thus $
\det (M)\geq 1$. But $\det (M)=\det (A)/\det (B)$. So we have (i). If
equality holds in (i), then $\lambda _{i}=1$, so $M$ is the identity matrix,
which yields $A=B$. Inequality (ii) is equivalent to 
$$
\det (M) M^{-1}(w,w)\geq |w|^2,
$$
which we rewrite as $\left( \prod \lambda _{i}\right) \sum w_{j}^{2}\lambda _{j}^{-1}\geq \sum w_{j}^{2}.$ This 
holds since $(\prod \lambda _{i})\lambda _{j}^{-1}\geq 1$, as $\lambda_{i}\geq 1$. So we obtain (ii). Equality holds in (ii) for all $v$ only if equality holds in the above inequality for all $w$. Then $\lambda_i=1$, which again yields $A=B$.
\end{proof}

Let $\ol{\exp}$ denote the exponential map of $\Omega _{\ol{g}}$. By \eqref{eq:g-delta}, the natural identification $
T_{o}\Omega _{\ol{g}}\simeq {\R}^{n}$ is an isometry. We also have $T_{x}(T_{o}\Omega _{\ol{g}})\simeq T_{x}{\R}^{n}\simeq {\R}^{n}$. Furthermore, since $x_{i}$ are normal coordinates, $\ol{\exp}_{o}(x)=x$ for $x\in \Omega _{\ol{g}}$. The volume element of $T_{x}\Omega _{\ol{g}}$ is $\sqrt{\det (\ol{g}_{x})}dx$, where $dx$ is the standard volume element of ${\R}^{n}$. Hence the Jacobian of $\ol{\exp}_{o}$ at $x$ is given by 
$$
J_{\overline g}(x)=\sqrt{\det (\ol{g}_{x})}.
$$
Set $J_k\coloneqq J_{\delta^k}$. Recall that $|\cdot|_g\coloneqq \sqrt{g(\cdot,\cdot)}$, and set 
$|\cdot|_k\coloneqq |\cdot|_{\delta^k}$. So $|\cdot|_0=|\cdot|$ is the Euclidean norm. Also recall that $\nabla^g$ denotes the gradient with respect to $g$, and set $\nabla^k\coloneqq \nabla^{\delta^k}$. So $\nabla^0=\nabla$ is the Euclidean gradient. Note that $g(v,\nabla^g f)=df(v)=\delta(v,\nabla f)$, which yields $v^T g \nabla^g f=v^T\nabla f$. So 
\begin{equation}\label{eq:nablag}
\nabla^g f=g^{-1}\nabla f.
\end{equation}
Let $r(x)\coloneqq|x|$ and $\theta (x) \coloneqq x/|x|$ be the polar coordinates on ${\R}^{n}\setminus \{o\}$. Note that $J_k(r\theta)$ does not depend on $\theta$. So we denote this quantity by $J_k(r)$. The last lemma together with Rauch's comparison theorem yields:

\begin{lem}
\label{lem:J} For any metric $g\in {\mathcal{M}}_{k}^{\star}(B^{n})$, and differentiable function $f$ on $\Omega_{\ol g}$,

\begin{enumerate}
\item[(i)] $r\mapsto J_{\ol{g}}(r\theta)/J_{k}(r)$ is nondecreasing,

\item[(ii)] $J_{\ol{g}}(r\theta)\geq J_{k}(r)$, with equality everywhere only if $\ol{g}=\delta^{k}$,

\item[(iii)] $J_{\ol{g}}(r\theta)\,|\nabla^{\ol{g}}f(r\theta)|_{\ol{g}}\geq J_{k}(r)\,|\nabla^{k}f(r\theta)|_{k}$.
\end{enumerate}
\end{lem}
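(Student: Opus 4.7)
The plan is to derive Lemma~\ref{lem:J} from Rauch's comparison theorem together with the Gauss lemma and the linear algebra of Lemma~\ref{lem:LA}.  First I would establish the pointwise inequality $\ol g_x \geq \delta^k_x$ as quadratic forms on $\R^n$ at every $x = r\theta \in \Omega_{\ol g}$.  Since the $x_i$ are normal coordinates, $\ol{\exp}_o(x)=x$, and so its differential at $r\theta$, regarded as a linear map $\R^n \to T_{r\theta}\Omega_{\ol g} \simeq \R^n$, is the identity.  For $v \in \R^n$, this differential applied to $v$ may also be identified with $J(r)$, where $J$ is the Jacobi field along the radial geodesic $\rho_\theta$ satisfying $J(0)=0$ and $J'(0) = v/r$.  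Hence $\ol g_{r\theta}(v,v) = |J(r)|_{\ol g}^2$, and the analogous computation in the model space of constant curvature $k$ (whose normal-coordinate metric is $\delta^k$) yields $\delta^k_{r\theta}(v,v) = |\tilde J(r)|_k^2$ for the corresponding model Jacobi field $\tilde J$.  Since $\ol g$ has curvature $\leq k$, Rauch's theorem gives $|J(r)|_{\ol g} \geq |\tilde J(r)|_k$ for $v \perp \theta$; for $v$ parallel to $\theta$ both sides equal $|v|$ by the Gauss lemma, and Gauss orthogonality in both metrics propagates the inequality to every $v$.

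With $\ol g_x \geq \delta^k_x$ in hand, parts~(ii) and (iii) fall out of Lemma~\ref{lem:LA} applied to $A = \ol g_x$, $B = \delta^k_x$.  Part~(i) of that lemma gives $\det \ol g_x \geq \det \delta^k_x$, i.e.\ $J_{\ol g}(r\theta) \geq J_k(r)$, proving (ii); its equality clause forces $\ol g_x = \delta^k_x$ at every $x$, and hence $\ol g = \delta^k$, whenever $J_{\ol g}(r\theta) = J_k(r)$ identically.  For (iii), I would apply part~(ii) of that lemma at each $x$ with $v = \nabla f(x)$, the Euclidean gradient; by \eqref{eq:nablag}, $|\nabla^{\ol g} f|_{\ol g}^2 = (\nabla f)^T \ol g_x^{-1} \nabla f = \ol g_x^{-1}(\nabla f,\nabla f)$, and analogously for $\delta^k$, so taking square roots of the lemma's conclusion yields (iii).

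The monotonicity~(i) is the main obstacle, as it does not follow from pointwise Rauch alone.  The plan is to analyze the Riccati equation $S' + S^2 + R = 0$ satisfied along $\rho_\theta$ by the shape operator $S(r)$ of the geodesic sphere of radius $r$ in $\Omega_{\ol g}$ at $r\theta$, where $R$ is the tidal force operator restricted to the $(n-1)$-dimensional subspace perpendicular to $\rho_\theta'$.  The curvature bound yields $R \leq k\,I$ in the operator sense, so comparison with the scalar model Riccati $u' + u^2 + k = 0$, solved by $u(r) = s_k'(r)/s_k(r)$ with $s_k$ satisfying $s_k'' + k s_k = 0$, $s_k(0)=0$, $s_k'(0)=1$, and matched at the common initial asymptotics $S(r) \sim (1/r)I \sim u(r)I$ as $r \to 0^+$, produces the operator inequality $S(r) \geq u(r)\,I$ for all $r$.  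Taking trace and noting that $\tfrac{d}{dr}\log(r^{n-1} J_{\ol g}(r\theta)) = \textup{tr}\,S(r)$ while $\tfrac{d}{dr}\log s_k(r)^{n-1} = (n-1)u(r)$, I would conclude $\tfrac{d}{dr}\log(J_{\ol g}(r\theta)/J_k(r)) \geq 0$, which is (i).  The delicate point here is precisely this singular initial condition at $r = 0^+$: both $S(r)$ and $u(r) I$ blow up like $(1/r)I$, so one must carefully verify that the operator Riccati comparison remains valid when started from the matched singular limit rather than from a regular initial value.
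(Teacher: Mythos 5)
Your proposal is correct and follows essentially the same route as the paper: the central pointwise inequality $\ol g_x \geq \delta^k_x$ (which the paper obtains by citing Lee's Jacobi-field comparison in normal coordinates, and which you derive directly from Rauch together with the Gauss lemma), followed by Lemma~\ref{lem:LA}(i)--(ii) and \eqref{eq:nablag} for parts (ii) and (iii). For part (i) the paper simply cites Burago--Zalgaller and Bishop--Crittenden; your Riccati-comparison argument, including the care you flag about the singular matching $S(r)\sim (1/r)I\sim u(r)I$ as $r\to 0^+$, is precisely the standard content of those references, so you have correctly unpacked the citations rather than taken a different path.
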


\begin{proof}
For (i) see \cite[33.1.6]{burago-zalgaller1988} or \cite[p. 253]{bishop-crittenden1964}. Inequality (ii) follows from the inequality 
\begin{equation*}
\ol{g}(v,v)\geq \delta^k(v,v), 
\end{equation*}
which is due to Jacobi's equation in normal coordinates \cite[Thm. 11.10]{Lee2018}, and Lemma \ref{lem:LA}(i). Inequality (iii) also follows quickly from the above inequality via Lemma \ref{lem:LA}(ii) and \eqref{eq:nablag}.
\end{proof}

Now we are ready to establish our main result.

\begin{proof}[Proof of Theorem \protect\ref{thm:main2}]
Set $f=f_{\ol{g}}$,  and define
$$
\rho(\theta) \coloneqq \frac{J_{\ol{g}}\big(f(\theta)\theta\big)}{{J}_{k}\big(f(\theta)\big)},
\quad \quad\quad 
\mathcal{J}_{k}(s)\coloneqq\int_{0}^{s}J_{k}\big (r\big )r^{n-1}dr.
$$
By Lemma \ref{lem:J}(i), $J_{\ol g}(r\theta)\leq \rho(\theta)J_k(r)$ for $r\leq f(\theta)$.  Thus   the volume
\begin{multline*}
|\Omega _{\ol{g}}|
=
\int_{\Omega }J_{\ol{g}}(x)dx
=
\int_{S^{n-1}} \int_{0}^{f(\theta )}J_{\ol{g}}(r\theta )r^{n-1}drd\theta  \\
\leq 
\int_{S^{n-1}}\int_{0}^{f(\theta )}\rho(\theta) J_{k}\big (r \big )r^{n-1}drd\theta
= 
\int_{S^{n-1}}\rho(\theta)\mathcal{J}_{k}\big(f(\theta)\big)d\theta,
\end{multline*}
where $d\theta $ is the standard volume element of $S^{n-1}$.
Extend $f$ radially to $\Omega \setminus \{o\}$ by setting $f(x)\coloneqq f(\theta (x))$, and let $F(x)\coloneqq r(x)-f(x)$. Then $|\nabla^{\ol{g}}F|_{\ol{g}}^{2}=1+|\nabla ^{\ol{g}}f|_{\ol{g}}^{2}$
since $|\nabla^{\ol{g}}r|_{\ol{g}}=|\nabla r|=1$, and by the
Gauss lemma $\ol{g}(\nabla ^{\ol{g}}\,r,\nabla ^{\ol{g}}f)=0$. Thus, by the coarea formula, the perimeter
\begin{multline*}
\!\!\!\!|\partial \Omega _{\ol{g}}|
=
|F^{-1}(0)|
=
\frac{d}{ds}\Big |_{s=0}\int_{-s}^{0}\left|F^{-1}(t)\right|dt
=
\frac{d}{ds}\Big |_{s=0}\int_{F^{-1}([-s,0])}\left|\nabla^{\ol{g}}F(x)\right|_{\ol{g}}\,J_{\ol{g}}(x)dx \\
=
\frac{d}{ds}\Big |_{s=0}\int_{S^{n-1}}\int_{f(\theta )-s}^{f(\theta)}\left|\nabla^{\ol{g}}F(r\theta )\right|_{\ol{g}}\,J_{\ol{g}
}(r\theta )r^{n-1}drd\theta \\
=
\int_{S^{n-1}}\sqrt{1+|\nabla^{\ol{g}}f(\theta)|_{\ol{g}}^{2}}\,\,J_{\ol{g}}\big(f(\theta)\theta\big)f^{n-1}(\theta)\,d\theta .
\end{multline*}
Now applying Lemma \ref{lem:J}(iii), we obtain 
\begin{equation}\label{eq:main}
\I(\Omega _{\ol{g}})\geq \frac{\left( \int_{S^{n-1}}\sqrt{\rho^{2}+|\nabla ^{k}f|_{k}^{2}}\,\,J_{k}(f)f^{n-1}d\theta \right)^{n}}{\Big(\int_{S^{n-1}}\rho\, \mathcal{J} _{k}(f)d\theta \Big)^{n-1}}.
\end{equation}
So we may write 
$$
\I(\Omega _{\ol{g}})\geq \lambda^{n-1} (1),\quad\; \text{where}\quad\; 
\lambda (t)\coloneqq\frac{\left( \int_{S^{n-1}}\sqrt{\rho
^{2t}+|\nabla ^{k}f|_{k}^{2}}\,J_{k}(f)f^{n-1}d\theta \right)^{\frac{n}{n-1}}}{\int_{S^{n-1}}\rho ^{t}\mathcal{J} _{k}(f)d\theta },
$$
for $t\in \lbrack 0,1]$. Let $A^{\frac{n}{n-1}}$ denote the numerator and $B$
the denominator of $\lambda (t)$. Then 
\begin{equation}\label{eq:lambdaprime}
\lambda ^{\prime }(t)=\frac{A^{\frac{1}{n-1}}}{B}\left( \frac{n}{n-1}
A^{\prime }-\frac{A}{B}B^{\prime }\right) =\frac{A^{\frac{1}{n-1}}}{B}
\int_{S^{n-1}}C\ln (\rho )\rho ^{t}J_{k}(f)f^{n-1}\,d\theta ,
\end{equation}
where 
$$
C\coloneqq\frac{n}{n-1}\frac{\rho ^{t}}{\sqrt{\rho ^{2t}+\left|\nabla^{k}f\right|_{k}^{2}}}-\frac{A}{B}\frac{\mathcal{J} _{k}(f)}{\,J_{k}(f)f^{n-1}}.
$$
By Lemma \ref{lem:J}(ii), $\rho \geq 1$. Thus the sign of $\lambda'$ depends on that of $C$. By assumption, 
$|f-R|\leq \epsilon $ and $|\nabla f|\leq \epsilon $. So by \eqref{eq:nablag} and continuity of $J_k$ and $\mathcal{J}_k$,  for any $\overline{\epsilon }>0$ we may choose $\epsilon $ so small that 
$$
|f-R|\leq\ol\epsilon, \quad\;\;
|\nabla ^{k}f|_{k}\leq\ol\epsilon,\quad\;\;
\left|\frac{\,J_{k}(f)}{J_{k}\big (R\big )}-1\right|\leq\ol\epsilon,
\quad\text{and}\quad
\left|\,\frac{\mathcal{J} _{k}(f)}{\mathcal{J} _{k}(R)}-1\right| \leq\ol\epsilon.
$$
Then 
\begin{gather*}
A
\leq 
\sqrt{1+\overline{\epsilon }^{2}}J_{k}(R) \left( 1+\overline{\epsilon }\right) (R+\overline{\epsilon })^{n-1}\int_{S^{n-1}}\rho ^{t}d\theta,
\\
B
\geq 
\mathcal{J} _{k}(R)(1-\overline{\epsilon })\int_{S^{n-1}}\rho
^{t}d\theta .
\end{gather*}
Thus 
$$
C
\geq 
\frac{n}{n-1}\frac{1}{\sqrt{1+\overline{\epsilon }^{2}}}
-
\left(\frac{1+\ol\epsilon}{1-\ol\epsilon}\right)^2\left(\frac{R+\overline{\epsilon }}{R-\overline{\epsilon }}\right)^{n-1}\!\sqrt{1+\overline{\epsilon }^{2}}.
$$
So if $\ol\epsilon$ is sufficiently small, then $C>0$, which yields $\lambda ^{\prime }\geq 0$. Thus $\lambda (1)\geq \lambda (0)$. 
But $\lambda^{n-1}(0)=\I(\Omega _{\delta^k})$. So $\I(\Omega _{\ol{g}})\geq \I(\Omega _{\delta^k})$ as desired.

Finally suppose that $\I(\Omega _{\ol{g}})= \I(\Omega _{\delta^k})$. Then $\lambda(1)=\lambda(0)$. So $\lambda^{\prime}=0$ identically. Since $C>0$, it follows from \eqref{eq:lambdaprime} that $\rho=1$ identically. So $J_{\overline g}(f(\theta)\theta)=J_{k}(f(\theta))$. Consequently, by Lemma \ref{lem:J}(i), $J_{\overline g}(r\theta)\leq J_{k}(r)$ for $r\leq f(\theta)$. Thus $\overline g=\delta^k$ by Lemma \ref{lem:J}(ii).
\end{proof}

\section*{Acknowledgments}
We thank Igor Belegradek, Joe Hoisington, Michael Loss, and Erik I. Verriest for helpful discussions. Thanks also to the anonymous referees for suggestions to enhance the exposition of this work.

\bibliographystyle{amsplain}
\bibliography{references}

\end{document}